\DeclareMathOperator{\Ker}{Ker}
\DeclareMathOperator{\lk}{lk}
\DeclareMathOperator{\Gr}{Gr}
\newcommand{\tildetimes}{\mathbin{\widetilde{\times}}}
\newcommand{\strutik}{\vrule height 2.6ex depth 1ex width 0pt}
\newtheorem{theorem}{Theorem}
\theoremstyle{remark}
\newtheorem*{remark}{Remark}
\newtheorem{ir}{Important Remark}
\newtheorem*{xmp}{Example}
\title{Polynomial-valued constant hexagon cohomology}
\author{Igor G. Korepanov}
\date{April 2019}
\begin{document}

\sloppy

\maketitle

\begin{abstract}
Hexagon relations are algebraic realizations of four-dimen\-sional Pachner moves. `Constant'---not depending on a 4-simplex in a triangulation of a 4-manifold---hexagon relations are proposed, and their polynomial-valued cohomology is constructed. This cohomology yields polynomial mappings defined on the so called `coloring homology space', and these mappings can, in their turn, yield piecewise linear manifold invariants. These mappings are calculated explicitly for some examples.

It is also shown that `constant' hexagon relations can be obtained as a limit case of already known `nonconstant' relations, and the way of taking the limit is not unique. This non-uniqueness suggests the existence of an additional structure on the `constant' coloring homology space.
\end{abstract}

\section{Introduction}\label{s:i}

A piecewise linear (PL) manifold may be specified combinatorially by its given \emph{triangulation}. Most things we want to know about a PL manifold belong, however, to the manifold itself, and must be independent of a specific triangulation. This leads to the idea of, first, representing a transition from one triangulation to another as a combination of simple steps, and second, inventing an algebraic structure that corresponds to a triangulation but behaves under these steps in such a simple way that can produce quantities that do not change under these steps at all.

The mentioned simple steps are provided by the theorem of Pachner~\cite{Pachner,Lickorish}. In application to four-dimen\-sional closed PL manifolds, it states that there are just three kinds of \emph{Pachner moves} that can, together with their inverses, make up a chain connecting any two triangulations of a given manifold. Then, an algebraic structure is proposed based on \emph{hexagon relations} and their \emph{cohomology}.

Hexagon relations are `algebraic realizations' of four-dimen\-sional Pachner moves: this means, informally, that they imitate these moves algebraically. Somewhat similar things are known for \emph{three}-dimen\-sional manifolds and their quantum invariants~\cite{TV}. It appears, however, that \emph{quandles} and quandle cohomology---that yield invariants of \emph{knots} and their higher analogues~\cite{CKS}---are closer to our constructions.

Hexagon relations may be called analogues of quandles, and hexagon cohomology analogue of quandle cohomology, but applicable to 4-manifolds rather than knots. One essential difference is that, in the most general known case, hexagon relations (together with their cohomology) are \emph{not constant}: they vary for different 4-simplices (\emph{pentachora}) of the triangulation. In particular, a construction of invariants of a \emph{pair} ``manifold, middle cohomology class'' has been proposed in~\cite{nonconstant}, based on these nonconstant relations.

In the present paper, however, we work with \emph{constant} relations, and just a little bit with their neighborhood in the `nonconstant' space. The point is that constant relations are, first, already interesting in themselves, and second, calculations (see Subsection~\ref{ss:tn} below) suggest that there is a very nontrivial interplay between the `constant' and `nonconstant' cases.

The contents of the rest of this paper by sections is as follows:
\begin{itemize}\itemsep 0pt
 \item in Section~\ref{s:C}, we introduce `permitted colorings' of a simplicial complex---the basis for our hexagon relations. Interestingly, our permitted colorings immediately bring about some---their own---sort of homology~\eqref{ch};
 \item in Section~\ref{s:H}, we describe four-dimen\-sional Pachner moves in the form suitable for us, and introduce \emph{linear} constant hexagon relations;
 \item in Section~\ref{s:P}, we introduce our polynomial-valued hexagon cochain complex. This is a simple and general algebraic construction, dealing just with a sequence of  `standard' simplices, one for any dimension $n=1,2,\ldots$ (this may be contrasted with the `coloring homology' mentioned above which depends on a chosen simplicial complex!);
 \item in Section~\ref{s:K}, we discover the interplay between the coloring homology, hexagon cohomology, and usual simplicial cohomology: under some technical conditions, a hexagon cohomology class and a coloring homology class produce together a simplicial cohomology class~\eqref{er}. The dependence of the latter on the coloring homology class is, however, \emph{nonlinear}!
 \item In Section~\ref{s:M}, we specialize these results for four-\emph{manifolds}, introducing mappings \eqref{col-3} and~\eqref{col-4} that are \emph{independent of a triangulation};
 \item in Section~\ref{s:L}, we explain how our `constant' hexagon can be obtained from the `nonconstant' hexagon of~\cite{nonconstant}. This does not look completely trivial;
 \item in Section~\ref{s:E}, we present some calculation results showing what the mentioned mappings \eqref{col-3} and~\eqref{col-4}---from which PL manifold invariants can be extracted algebraically---can actually look like. Also, as we have already mentioned, we demonstrate a very nontrivial interplay between the `constant' and `nonconstant' cases.
 \item Finally, in Section~\ref{s:D}, we briefly discuss our results and further work.
\end{itemize}

\section{Permitted and edge-generated colorings of a simplicial complex}\label{s:C}

\subsection{Definition of a coloring}

A \emph{coloring} of a simplicial complex~$K$ means assigning a \emph{color} to each of its simplices of a given dimension~$n$. Color means here an element of a given \emph{color set}~$X$. In this paper, we take $n=3$, and $X=F^2$ ---a two-dimensional linear space over a fixed field~$F$. Thus, our coloring is a map
\begin{equation}\label{Kc}
(\text{set of all tetrahedra in }K) \to F^2.
\end{equation}
We write the color of an individual tetrahedron~$t$---its image under the map~\eqref{Kc}---as a two-column
\begin{equation}\label{sft}
\mathsf x_t = \begin{pmatrix} x_t \\ y_t \end{pmatrix},\qquad x_t, y_t \in F.
\end{equation}

\begin{remark}
Of course, coloring simplices of other dimension(s) than three may be also of interest, as well as using a set of colors other than~$F^2$. In this paper we, however, confine ourself to the case~\eqref{Kc}.
\end{remark}

\begin{remark}
Also, using ring~$\mathbb Z$ of integers instead of field~$F$ might be of interest.
\end{remark}

\subsection{Vertex ordering}\label{ss:vo}

Typically, we will be considering finite simplicial complexes~$K$ with vertices \emph{numbered} from 1 through their total number~$N_0^{(K)}$. The subscript here stays for the fact that vertices are zero-dimen\-sional; more generally, the number of $n$-sim\-plices in a finite simplicial complex will be denoted $N_n$ or~$N_n^{(K)}$. Note that our ``standard'' simplex~$\Delta^n$ has thus vertices $1,\ldots,n+1$, instead of probably more usual $0,\ldots,n$.

Consider, however, a situation where we deal with a complex~$\mathcal K$ and its subcomplex $K\subset \mathcal K$ with vertices
\begin{equation}\label{nmb}
i_1,\ldots, i_{N_0^{(K)}}, \qquad i_1<\ldots <i_{N_0^{(K)}},
\end{equation}
whose numbering is inherited from~$\mathcal K$ (for instance, $\mathcal K=\Delta^n$ may be an $n$-simplex, and $K$ one of its $(n-1)$-faces). In a situation like this, $K$ can retain its vertex numbering~\eqref{nmb}, but the point is that if we have proved in this paper a theorem about a complex~$K$ whose vertices have numbers $1,\ldots, N_0^{(K)}$, then it can be always transferred to the same complex with vertices denoted as in~\eqref{nmb}, just by the obvious substitution
\begin{equation}\label{nmb-subst}
1\to i_1,\quad\ldots\;,\quad N_0^{(K)}\to i_{N_0^{(K)}}.
\end{equation}

We usually denote triangles by the letter~$s$, tetrahedra by the letter~$t$, and pentachora by the letter~$u$. When we write these in terms of their vertices, these latter go, by default, in the \emph{increasing order}:
\begin{equation*}
\text{if}\quad t=ijkl, \quad\text{then}\quad i<j<k<l.
\end{equation*}

\subsection{Permitted colorings of one pentachoron}

Interesting structures appear when we declare some of the colorings \emph{permitted} (which means of course that other colorings are prohibited). Our permitted colorings will form a \emph{linear subspace} in the space of all colorings, described in terms of either \emph{edge functionals} or \emph{edge vectors}. Some motivation for introducing these functionals and vectors can be found in~\cite{nonconstant} (and see also Section~\ref{s:L} of the present work for explanation of how the structures introduced below can be obtained from those in~\cite{nonconstant}).

In this Subsection, we begin with introducing our permitted colorings for just one pentachoron.

\subsubsection*{Permitted colorings in terms of edge functionals}

In this approach, permitted colorings of a pentachoron are singled out by \emph{linear relations}. Namely, there is one linear relation associated with each pentachoron \emph{edge}~$ij$, formulated as the vanishing of a linear \emph{edge functional}~$\upphi_{ij}$. This~$\upphi_{ij}$ can depend only on the colors of the three tetrahedra containing the edge: $t\supset ij$. Edge functionals are defined for \emph{unoriented} edges: $\upphi_{ij}=\upphi_{ji}$.

The set (linear space)~$V_u$ of permitted colorings for a pentachoron~$u$ is, by definition, the intersection of kernels of all ten edge functionals:
\begin{equation}\label{pk}
V_u = \bigcap_{ij\subset u} \Ker \upphi_{ij}.
\end{equation}

The colorings of a tetrahedron~$t$ being written as two-columns~\eqref{sft}, we can write the restriction of~$\upphi_{ij}$ onto~$t$, or \emph{$t$-component} of~$\upphi_{ij}$, as a two-\emph{row}:
\begin{equation}\label{phi_ij-t}
\upphi_{ij}|_t = \begin{pmatrix} \phi_{t,ij}^{(1)} & \phi_{t,ij}^{(2)} \end{pmatrix}.
\end{equation}

Consider pentachoron $u=12345$ and its 3-face $t=1\ldots \hat i\ldots 5$---that is, tetrahedron~$t$ lies \emph{opposite} vertex~$i$. The $t$-components of (nonvanishing on~$t$) edge functionals are, by definition, as follows:
\begin{equation}\label{phi_1234-const}
\setlength\arraycolsep{.6em}
\begin{pmatrix} \upphi_{k_1k_2} \\ \upphi_{k_1k_3} \\ \upphi_{k_1k_4} \\
 \upphi_{k_2k_3} \\ \upphi_{k_2k_4} \\ \upphi_{k_3k_4} \end{pmatrix}_t =
(-1)^{i+1}
\begin{pmatrix}0 & 1\\
1 & -1\\
-1 & 0\\
-1 & 0\\
1 & 1\\
0 & -1\end{pmatrix},
\end{equation}
where $1\le k_1<k_2<k_3<k_4\le 5$ are the four vertices of~$t$, and we have written just one subscript~$t$ meaning $t$-component for all~$\upphi_{ij}$. For other pentachora $i_1i_2i_3i_4i_5$, substitution~\eqref{nmb-subst} applies, that is, $1\to i_1$, \ldots, $5\to i_5$.

A direct calculation shows that
\begin{equation}\label{du}
\dim V_u = 5.
\end{equation}

\begin{xmp}
Relation $\upphi_{12}=0$ in pentachoron~$12345$ looks as follows:
\begin{equation}\label{12}
y_{1234} - y_{1235} + y_{1245} = 0.
\end{equation}
\end{xmp}

\subsubsection*{Permitted colorings in terms of edge vectors}

The same linear space~$V_u$ of permitted colorings of one pentachoron can be described as the span of ten \emph{edge vectors}. Given an edge~$b$, edge vector~$\uppsi_b$ is a permitted coloring of a simplicial complex---at this moment, one pentachoron---that has nonvanishing components only for tetrahedra $t\supset b$. Namely, by definition, for tetrahedron~$1234$ they are as follows:
\begin{equation}\label{psi_1234-const}
\begin{pmatrix} \uppsi_{12} & \uppsi_{13} & \uppsi_{14} & \uppsi_{23} & \uppsi_{24} & \uppsi_{34} \end{pmatrix}_{1234} 
 = \begin{pmatrix}1 & -1 & 0 & 0 & 1 & -1\\
0 & 1 & -1 & -1 & 1 & 0\end{pmatrix},
\end{equation}
while for other pentachora $i_1i_2i_3i_4$, substitution~\eqref{nmb-subst} again applies, that is, $1\to i_1$, \ldots, $4\to i_4$.

\begin{ir}
There are no additional signs in~\eqref{psi_1234-const}, in contrast with $(-1)^{i+1}$ in~\eqref{phi_1234-const}!
\end{ir}

The fact that edge vectors~\eqref{psi_1234-const} generate the same five-dimen\-sional space~$V_u$ as in~\eqref{pk}, is checked by a direct calculation.

\subsection{Permitted and edge-generated colorings of a simplicial complex}\label{ss:pe}

By definition, a permitted coloring of a simplicial complex~$K$ is such whose restriction onto any pentachoron~$u\subset K$ is permitted. Permitted colorings of~$K$ form a linear space denoted~$V_K$.

And \emph{edge-generated colorings} are, also by definition, linear combinations of edge vectors~$\uppsi_b$ whose components are described by the same formula~\eqref{psi_1234-const} as for one pentachoron (but there may be of course more than three tetrahedra containing a given edge in an arbitrary~$K$). Edge-generated colorings form a linear \emph{subspace}
\begin{equation}\label{pe}
V_K^{(0)} \subset V_K
\end{equation}
in the space of permitted colorings, because edge vectors generate, according to the above, permitted colorings in every pentachoron.

\subsection{Coloring homology}\label{ss:ch}

In this paper, we will understand \emph{coloring homology} simply as the factor
\begin{equation}\label{ch}
H_{\mathrm{col}}(K,F)\, \stackrel{\mathrm{def}}{=}\, V_K / V_K^{(0)} .
\end{equation}
This is the only homology group of the following very short chain complex:
\begin{equation}\label{cc}
F^{N_1} \xrightarrow{\mbox{\scriptsize edge vectors}} F^{2 N_3}\xrightarrow{ \mbox{\scriptsize \begin{tabular}{c}edge functionals\\in each pentachoron\end{tabular} } } F^{10 N_4} .
\end{equation}
Recall (see the beginning of Subsection~\ref{ss:vo}) that $N_n=N_n^{(K)}$ is the number of $n$-simplices in complex~$K$. The first term in~\eqref{cc} consists of formal linear combinations of edges with coefficients in~$F$. The second term is the space of \emph{all} (permitted or not) colorings of~$K$, and each edge~$b$ is sent by the first arrow to the edge vector~$\uppsi_b$. Finally, the second arrow is the direct sum of \emph{all} edge functionals; these act in each pentachoron separately, and in each pentachoron there are ten of them. The rightmost term in~\eqref{cc} is thus the direct sum of copies of~$F$ for each pair $u\supset b$, with $u$ a pentachoron and $b$ and edge.

\begin{ir}
Sequence \eqref{cc}, or a modification of it, can actually be extended both to the left and to the right, compare~\cite[\color{black}Sections 5.2 and~5.3]{nonconstant}. This means that more coloring, or `exotic', homology groups can be defined. In the present paper we, however, work only with $H_{\mathrm{col}}(K,F)$ given by~\eqref{ch}.
\end{ir}

\section{Four-dimensional Pachner moves and linear constant hexagon relations}\label{s:H}

\subsection{Pachner moves}\label{ss:P}

Consider a 5-simplex $\Delta^5$. Its boundary~$\partial \Delta^5$ consists of six pentachora (\,=\,4-simplices). Imagine that $k$ of these pentachora, $1\le k\le 5$, enter in a triangulation of a four-dimen\-sional piecewise linear (PL) manifold~$M$. Then we can replace them with the remaining $6-k$ pentachora, without changing~$M$. This is called four-dimen\-sional \emph{Pachner move}, and there are five kinds of them: 1--5, 2--4, 3--3, 4--2, and 5--1; here the number before the dash is~$k$, while the number after the dash is, of course, $6-k$.

We sometimes call the initial configuration---cluster of~$k$ pentachora---the \emph{left-hand side} (l.h.s.)\ of the Pachner move, while its final configuration---cluster of~$6-k$ pentachora---its \emph{right-hand side} (r.h.s.).

\subsection{Linear constant hexagon relations}\label{ss:fh}

Consider one of the Pachner moves as described in Subsection~\ref{ss:P}, and denote its l.h.s.\ as $C$, and its r.h.s.\ as~$\bar C$.

\begin{ir}\label{ir:full}
According to Subsection~\ref{ss:vo}, there is a given \emph{order} on the vertices of our simplex~$\Delta^5$, for instance, $\Delta^5=123456$. So, it must be noted that $C$ is allowed to consist of \emph{any} $k$ pentachora, whatever the numbers of their vertices. Informally, we use the words \emph{full hexagon} for combinatorial or algebraic statements relating $C$ and~$\bar C$ in such situation. This applies, in particular, to the following Theorem~\ref{th:fh}.
\end{ir}

\begin{theorem}\label{th:fh}
 \begin{enumerate}
  \item\label{i:fh1} The restrictions of permitted colorings of the left-hand side~$C$ of a Pachner move (described above) onto the common boundary $\partial C=\partial \bar C$ yield the same set of colorings of this common boundary as the restrictions of permitted colorings of the right-hand side~$\bar C$.
  \item\label{i:fh2} Moreover, all these permitted colorings of $\partial C=\partial \bar C$ can be generated by edge vectors\/~$\uppsi_b$ for edges $b\subset \partial C=\partial \bar C$ only.
  \item\label{i:fh3} There are fixed numbers~$a_k$, \ $1\le k\le 5$, such that, if we fix the zero coloring on $\partial C=\partial \bar C$, the dimension of permitted coloring space of (inner tetrahedra of)~$C$ is~$a_k$, and the same dimension for~$\bar C$ is~$a_{6-k}$. Namely,
\begin{equation*}
a_1 = a_2 = a_3 = 0, \qquad a_4 = 1, \qquad a_5 = 3.
\end{equation*}
These permitted colorings of only inner tetrahedra are generated by vectors\/~$\uppsi_b$ for only inner edges\/~$b$ in either~$C$ or~$\bar C$.
  \item\label{i:fh4} Consider a situation where\/ $C$ was, and\/ $\bar C$ has become, part of triangulation of a PL 4-manifold~$M$. Denote\/ $K_{\mathrm{ini}}$ and\/~$K_{\mathrm{fin}}$ the simplicial complexes determined by the corresponding triangulations of~$M$. Then, there is a canonical isomorphism (see Subsection~\ref{ss:pe} for notations)
\begin{equation}\label{i-f}
H_{\mathrm{col}}(K_{\mathrm{ini}},F) \,\cong\, H_{\mathrm{col}}(K_{\mathrm{fin}},F)
\end{equation}
that can be described as follows. We say that a permitted coloring of\/~$K_{\mathrm{ini}}$ \emph{corresponds} to a permitted coloring of\/~$K_{\mathrm{fin}}$ if their restrictions onto the closed complement of\/~$C$ or\/~$\bar C$ coincide. This correspondence is not generally one-to-one, but it becomes an isomorphism after factoring by edge-generated colorings.
 \end{enumerate}
\end{theorem}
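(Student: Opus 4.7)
All statements (i)--(iii) concern one 5-simplex and so reduce to finite linear algebra on $\partial\Delta^5$; I would carry them out by a combined calculation, then deduce (iv) as a gluing statement from the previous three parts. For (iii), for each $k\in\{1,\dots,5\}$ pick, by Important Remark~\ref{ir:full}, a representative partition $\partial\Delta^5=C\cup\bar C$ with $|C|=k$, and compute directly the kernel of the restriction $V_C\to(\text{colorings of }\partial C)$: this is the space of permitted colorings of $C$ supported on its inner tetrahedra. A rank computation on the corresponding block of edge functionals~(\ref{phi_1234-const}) gives $a_k=0,0,0,1,3$, and a matching rank computation on the edge vectors~(\ref{psi_1234-const}) attached to inner edges of $C$ shows this kernel equals the span of those vectors.

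For (i) and (ii), I would compute $\dim V_C$ and $\dim V_{\bar C}$ directly, and verify that the restriction images $V_C|_{\partial C}$ and $V_{\bar C}|_{\partial C}$ coincide as subspaces of the coloring space of $\partial C$. Equality of dimensions is the identity $\dim V_C - a_k = \dim V_{\bar C} - a_{6-k}$, checked case by case. Equality of the actual subspaces comes from showing both coincide with the restriction to $\partial C$ of the full permitted coloring space $V_{\partial\Delta^5}$; that in turn follows from another rank check establishing $V_{\partial\Delta^5}=V_{\partial\Delta^5}^{(0)}$, and this identity simultaneously proves (ii), because the $\uppsi_b$ with $b$ an inner edge of $C$ or of $\bar C$ vanish on $\partial C$, so only the boundary edge vectors survive on the common boundary.

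For (iv), consider a Pachner move $C\to\bar C$ inside $M$. I would define a correspondence $\Phi$ by keeping a permitted coloring $x$ of $K_{\mathrm{ini}}$ fixed on $K_{\mathrm{ini}}\setminus\mathrm{int}\,C$ and, using (i), choosing any permitted extension $\bar x$ of $x|_{\partial C}$ to $\bar C$; the glued coloring of $K_{\mathrm{fin}}$ is permitted by construction. The choice of $\bar x$ is unique up to a permitted coloring of $\bar C$ vanishing on $\partial\bar C$, which by (iii) is edge-generated from inner edges of $\bar C$ and hence trivial in $H_{\mathrm{col}}(K_{\mathrm{fin}},F)$. Thus $\Phi$ descends to a well-defined homomorphism on coloring homology, and the symmetric construction furnishes an inverse, giving the isomorphism~(\ref{i-f}).

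The main obstacle, beyond the mechanical $\partial\Delta^5$ calculation, is checking that $\Phi$ also respects the edge-generated subspace on the source side, i.e.\ that edge vectors of $K_{\mathrm{ini}}$ supported in the Pachner region map to edge-generated colorings in $K_{\mathrm{fin}}$. This is where (ii) and (iii) must be used jointly: common-boundary edge vectors are shared between the two triangulations, while inner-edge vectors of $C$ produce colorings vanishing on $\partial C$ and are absorbed into the $a_{6-k}$-dimensional indeterminacy on the $\bar C$ side, so everything matches up modulo $V_{K_{\mathrm{fin}}}^{(0)}$.
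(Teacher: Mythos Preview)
Your proposal is correct and follows essentially the same approach as the paper. The paper's own proof is extremely terse: items (i)--(iii) are dismissed as ``direct calculations'', and (iv) is handled in one sentence by observing that the inner-edge contribution is the same in $V_{K_{\mathrm{ini}}}$ and $V_{K_{\mathrm{ini}}}^{(0)}$ (and likewise for~$K_{\mathrm{fin}}$). Your write-up simply unpacks this: the finite linear-algebra verification on~$\partial\Delta^5$ for (i)--(iii), and then the explicit gluing correspondence~$\Phi$ for (iv), together with the check that edge-generated colorings map to edge-generated colorings.

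One small caution: when you say ``pick, by Important Remark~\ref{ir:full}, a representative partition'' for each~$k$, note that the Remark is making the \emph{opposite} point---that the theorem is asserted for \emph{every} choice of which $k$ pentachora form~$C$, not just one per~$k$. Since the edge functionals~(\ref{phi_1234-const}) carry vertex-order-dependent signs, different size-$k$ subsets are not a priori related by a symmetry of the structure, so the direct calculation must in principle cover all $\binom{6}{k}$ cases (or be accompanied by an argument reducing them to one). This is not a gap in your strategy, just something to keep track of when actually carrying out the computation.
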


\begin{proof}
Items \ref{i:fh1}--\ref{i:fh3} are proved by direct calculations. Item~\ref{i:fh4} follows then from the fact that the contribution of inner---with respect to~$C$---edges is the same in $V_{K_{\mathrm{ini}}}$ and $V_{K_{\mathrm{ini}}^{(0)}}$, and the same applies if we change $C$ to~$\bar C$ and subscript `$\mathrm{ini}$' to `$\mathrm{fin}$'.
\end{proof}

\begin{remark}
It may make sense to remind once again that a motivation for the above constructions can be found in~\cite{nonconstant}, combined with Section~\ref{s:L} of the present paper.
\end{remark}

\section{Polynomial-valued hexagon cochain complex: Definition}\label{s:P}

By definition, a \emph{constant polynomial $n$-cochain}~$\mathfrak c$, for $n\ge 3$, is an arbitrary polynomial defined on the linear space $V_{\Delta^n}$ of all permitted colorings of the standard $n$-simplex $\Delta^n = 1\dots (n{+}1)$.

The \emph{coboundary}~$\delta \mathfrak c$ of~$\mathfrak c$ is the polynomial
\begin{equation}\label{cb}
\delta \mathfrak c = \sum_{k=1}^{n+2} (-1)^{k+1}\, \mathfrak c_{1\dots \widehat{k} \dots (n+2)} 
\end{equation}
defined on the linear space $V_{\Delta^{n+1}}$ of all permitted colorings of the standard $(n{+}1)$-simplex $\Delta^{n+1} = 1\dots (n{+}2)$. In~\eqref{cb}, each $n$-face $1\dots \widehat{k} \dots (n+2)$ of~$\Delta^{n+1}$ is identified with the standard~ $\Delta^n$ in the natural way---that is, according to the general rule~\eqref{nmb-subst}.

The complex can be written as follows:
\begin{equation}\label{c}
0\longrightarrow C_{\mathrm{hex}}^3 \stackrel{\delta}{\longrightarrow} C_{\mathrm{hex}}^4 \stackrel{\delta}{\longrightarrow} \dots\,,
\end{equation}
where $C_{\mathrm{hex}}^n$ means the linear space of $n$-cochains.

\paragraph{Bilinear cochains.} One simple variation on the theme of polynomial-valued hexagon cochain complex may also be of interest. Namely, we define a \emph{constant bilinear $n$-cochain} as a bilinear form
\begin{equation}\label{bi}
V_{\Delta^n}\times V_{\Delta^n} \to F.
\end{equation}
This means that now a \emph{pair} of permitted colorings comes into play. Definition~\eqref{cb} remains the same in this case, as well as the form~\eqref{c} of the complex.

Our reasonings below in Sections \ref{s:K} and~\ref{s:M} will apply to both the `polynomial' and `bilinear' cases. We will prefer, however, to formulate and prove our Theorems \ref{th:f}, \ref{th:c} and~\ref{th:m} first for polynomial-valued hexagon cochain complexes as defined in the beginning of this Section, and then point out the changed necessary for the `bilinear' case after their respective proofs. Hopefully, this will make our exposition less cumbersome.

\section{Hexagon cohomology, coloring homology, and simplicial cohomology in a simplicial complex}\label{s:K}

In this Section, we work within a fixed finite simplicial complex~$K$, with the numbering of its vertices also fixed.

\subsection{A permitted coloring produces a chain map from hexagon polynomial cochain complex to simplicial cochain complex}\label{ss:f}

Standard $n$-simplex $\Delta^n = 1\dots (n{+}1)$ is isomorphic to any $n$-simplex $\sigma^n\subset K$. To be exact, we will be working with the isomorphism conserving the order of vertices, according to our general rule~\eqref{nmb-subst}. This isomorphism yields, in particular, the (bijective) mapping
\begin{equation}\label{Ds}
(\text{set of tetrahedra in }\Delta^n) \to (\text{set of tetrahedra in }\sigma^n).
\end{equation}

A permitted coloring of $\Delta^n$ or~$\sigma^n$ is a mapping from the l.h.s.\ or r.h.s.\ of~\eqref{Ds}, respectively, to~$F^2$. Hence, \eqref{Ds} yields the mapping of these colorings, and in the \emph{opposite} direction:
\begin{equation}\label{cDs}
(\text{permitted colorings of }\Delta^n) \leftarrow (\text{permitted colorings of }\sigma^n).
\end{equation}

Recalling now the polynomial cochain definition given in the beginning of Section~\ref{s:P}, and taking into account that our polynomials can be understood as $F$-valued \emph{functions}, we come to the following mapping (again from left to right):
\begin{equation}\label{oDs}
C_{\mathrm{hex}}^n
\to ( F \text{-valued functions on permitted colorings of }\sigma^n),
\end{equation}
where $C_{\mathrm{hex}}^n$ means the space of polynomial hexagon $n$-cochains, see~\eqref{c}. Taking mappings~\eqref{oDs} for all $\sigma^n \subset K$ at once, and \emph{assuming that a permitted coloring of~$K$ is given}, we arrive finally at a linear mapping
\begin{equation}\label{fn}
f_n\colon\quad C_{\mathrm{hex}}^n \to C^n(K,F),
\end{equation}
where $C^n(K,F)$ is the linear space of usual simplicial $F$-valued $n$-cochains on~$K$.

To justify the header of this Subsection, it remains to prove the following theorem.

\begin{theorem}\label{th:f}
Given a permitted coloring of~$K$, mappings~$f_n$~\eqref{fn} form together a chain map~$f$. That is, the following diagram
\begin{equation}\label{d}
\begin{tikzcd}
0 \arrow[r] \arrow[d]
 & C_{\mathrm{hex}}^3 \arrow[r, "\delta"] \arrow[d, "f_3"]
  & C_{\mathrm{hex}}^4 \arrow[r, "\delta"] \arrow[d, "f_4"]
   & \ldots \\
C^2(K,F) \arrow[r]
 & C^3(K,F) \arrow[r]
  & C^4(K,F) \arrow[r]
   & \ldots
\end{tikzcd}
\end{equation}
is commutative. Here the first row is the complex~\eqref{c}, while the second row is a truncated simplicial cochain complex for~$K$, with 0- and 1-cochain spaces cut out.
\end{theorem}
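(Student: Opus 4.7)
The leftmost square of~\eqref{d} commutes trivially, since its upper-left entry is~$0$. For the remaining squares, I need to verify $f_{n+1}\circ \delta = \delta\circ f_n$ as linear maps $C_{\mathrm{hex}}^n \to C^{n+1}(K,F)$ for every $n\ge 3$. The plan is to evaluate both composites on an arbitrary $\mathfrak c \in C_{\mathrm{hex}}^n$ and an arbitrary $(n{+}1)$-simplex $\sigma^{n+1} \subset K$, and observe that both expressions unfold to the same signed sum of values of~$\mathfrak c$.

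First I would unwind the right-hand composite. By definition of the simplicial coboundary, $(\delta f_n \mathfrak c)(\sigma^{n+1})$ is the signed sum $\sum_{k=1}^{n+2} (-1)^{k+1}\, (f_n \mathfrak c)(\sigma^n_k)$, where $\sigma^n_k$ is the $n$-face of $\sigma^{n+1}$ opposite its $k$-th vertex (in the induced order). Each term $(f_n \mathfrak c)(\sigma^n_k)$ is, by~\eqref{oDs}--\eqref{fn}, the polynomial~$\mathfrak c$ evaluated on the coloring of $\Delta^n$ obtained from the fixed permitted coloring of~$K$ by pulling back along the order-preserving identification $\Delta^n \cong \sigma^n_k$. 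Then I would unwind the left-hand composite: by~\eqref{cb}, $\delta \mathfrak c$ is the polynomial on $V_{\Delta^{n+1}}$ equal to $\sum_k (-1)^{k+1}\, \mathfrak c_{1\dots \widehat k \dots (n+2)}$, and $(f_{n+1}\delta\mathfrak c)(\sigma^{n+1})$ is obtained by plugging in the coloring of $\Delta^{n+1}$ pulled back from that of $\sigma^{n+1}\subset K$ via the order-preserving identification $\Delta^{n+1}\cong \sigma^{n+1}$.

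Term by term, the equality then reduces to a single compatibility statement: the coloring of $\Delta^n$ obtained by first pulling back the coloring of $K$ to $\sigma^n_k$ and then to $\Delta^n$ coincides with the one obtained by first pulling back to $\sigma^{n+1}$, then to $\Delta^{n+1}$, then restricting to the $n$-face $1\dots \widehat k\dots (n+2)$ (identified with $\Delta^n$ as in~\eqref{cb}). This is immediate because all three identifications are given by the same substitution rule~\eqref{nmb-subst}, so the composition of order-preserving bijections of totally ordered vertex sets is again order-preserving and equals the direct identification; hence the induced maps on colorings (restrictions to tetrahedra, via~\eqref{cDs}) agree. Note also that the restriction of a permitted coloring is permitted---which is precisely what makes $\delta \mathfrak c$ a well-defined function on $V_{\Delta^{n+1}}$ in the first place---so there is no issue with evaluating $\mathfrak c$ on the restricted coloring.

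The main obstacle is therefore purely notational: one must keep careful track of the three layers of identification (standard simplex vs.\ simplex of $K$, face inclusion of a standard simplex, and restriction of colorings) and of the signs $(-1)^{k+1}$ at each layer. Once these are aligned, there is nothing more to do; the signs in~\eqref{cb} are by design exactly those of the simplicial coboundary. Finally, the entire argument carries over verbatim to the bilinear case~\eqref{bi}: a pair of permitted colorings of~$K$ restricts to pairs on each face, and the face identifications respect the pairing, so~\eqref{cb} (interpreted as a sum of restricted bilinear forms) again makes $f$ into a chain map.
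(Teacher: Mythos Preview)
Your proof is correct and follows essentially the same approach as the paper's own proof, which simply notes that both coboundaries are given by alternated sums of the same form~\eqref{cb} over the $n$-faces of~$\Delta^{n+1}$. You have merely made explicit the compatibility of the order-preserving identifications~\eqref{nmb-subst} that the paper leaves implicit.
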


\begin{proof}
It remains to note that the consistency of~$f$ with the codifferentials in the two complexes follows at once from the fact that these codifferentials are given by the alternated sums of the same kind~\eqref{cb} over $(n-1)$-faces of~$\Delta^n$.
\end{proof}

\begin{ir}
For our polynomial cocycles, chain map~$f$ depends on a permitted coloring also \emph{polynomially}.
\end{ir}

\paragraph{Bilinear case.} The constructions of this Subsection, including Theorem~\ref{th:f}, are easily transferred to the bilinear case (see the paragraph containing~\eqref{bi}). In this case, a \emph{pair} of permitted colorings must be given, and the first row in~\eqref{d} will consist of bilinear hexagon cochains.

\subsection{Induced cohomology map and coloring homology}

Chain map~$f$~\eqref{d} induces mappings of the cohomology spaces. Since $f$ depends on a chosen permitted coloring, so do the cohomology mappings. In some important cases a cohomology mapping depends actually only on the \emph{coloring homology class} of the permitted coloring, and some---although not all!---of such cases are covered by the following theorem.

Recall (see, for instance, \cite{Lickorish}) that the \emph{link} $\lk(A,K)$ of a simplex~$A$ in a simplicial complex~$K$ consists of all simplices~$B$ such that the \emph{join} $A\star B$ is also a simplex in~$K$.

\begin{theorem}\label{th:c}
Suppose that, for every edge~$b$ of complex~$K$, the link\/ $\lk(b,K)$ has trivial $(n{-}2)$-th simplicial cohomology group:
\begin{equation}\label{lk}
H^{n-2}(\lk(b,K), F) = 0.
\end{equation}
Then, the cohomology map induced by chain map~$f$~\eqref{d} in dimension~$n\ge 3$,
\begin{equation}\label{cm}
f^{(n)}\colon\quad H_{\mathrm{hex}}^n \to H^n(K,F),
\end{equation}
depends only on the coloring homology class of the permitted coloring.
\end{theorem}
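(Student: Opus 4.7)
The plan is to reduce the statement to the case where the permitted coloring $c$ of $K$ is replaced by $c+\lambda\uppsi_b$ for a single edge $b\subset K$ and scalar $\lambda\in F$; since $V_K^{(0)}$ is spanned by the edge vectors $\uppsi_b$, writing an arbitrary element of $V_K^{(0)}$ as a finite linear combination of them and applying this basic step term by term yields the full statement. Fixing a hexagon cocycle $\mathfrak c\in C_{\mathrm{hex}}^n$, I would then study the simplicial $n$-cochain
\[
\Delta f_n(\mathfrak c)\;:=\;f_n^{\,c+\lambda\uppsi_b}(\mathfrak c)\,-\,f_n^{\,c}(\mathfrak c)\;\in\;C^n(K,F)
\]
and try to exhibit it as $\delta g$ for some $(n-1)$-cochain $g$ on $K$.

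The first step is localization: because $\uppsi_b$ has nonvanishing components only on tetrahedra containing $b$, the restriction $c|_{\sigma^n}$ entering $f_n(\mathfrak c)(\sigma^n)$ is unchanged whenever $b\not\subset\sigma^n$, so $\Delta f_n(\mathfrak c)$ is supported on those $n$-simplices of $K$ containing $b$, which are exactly the simplices $b\star\rho$ with $\rho$ an $(n-2)$-simplex of $\lk(b,K)$. Pulling $\Delta f_n(\mathfrak c)$ back along this bijection produces a cochain $\tilde g\in C^{n-2}(\lk(b,K),F)$. Because $\mathfrak c$ is a cocycle and $f$ is a chain map by Theorem~\ref{th:f}, we have $\delta\,\Delta f_n(\mathfrak c)=0$; testing this equation at $(n+1)$-simplices $\sigma^{n+1}=b\star\lambda$ of $K$, the only $n$-faces of $\sigma^{n+1}$ on which $\Delta f_n(\mathfrak c)$ does not automatically vanish are the $b\star(\lambda\setminus\{v\})$ (the two remaining faces $(b\setminus\{i\})\star\lambda$ lose the edge $b$). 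Up to a global sign determined by the vertex ordering, this is exactly the cocycle condition on $\tilde g$ at $\lambda$ in $\lk(b,K)$, so $\tilde g$ is a simplicial cocycle on $\lk(b,K)$.

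Invoking the hypothesis $H^{n-2}(\lk(b,K),F)=0$ then gives $\tilde g=\delta_{\lk}\tilde h$ for some $\tilde h\in C^{n-3}(\lk(b,K),F)$. I would lift $\tilde h$ back to an $(n-1)$-cochain $g$ on $K$ by setting $g(b\star\mu):=\tilde h(\mu)$ on $(n-1)$-simplices containing $b$ and $g:=0$ on the remaining $(n-1)$-simplices. For $\sigma^n\not\supset b$ both $\delta g(\sigma^n)$ and $\Delta f_n(\mathfrak c)(\sigma^n)$ vanish, while for $\sigma^n=b\star\rho$ only the $(n-1)$-faces $b\star(\rho\setminus\{v\})$ contribute to $\delta g$, yielding $\delta_{\lk}\tilde h(\rho)=\tilde g(\rho)$, which matches $\Delta f_n(\mathfrak c)(b\star\rho)$ up to the same sign. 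Hence $f_n^{\,c+\lambda\uppsi_b}(\mathfrak c)$ and $f_n^{\,c}(\mathfrak c)$ differ by a simplicial coboundary, and the induced cohomology classes agree. The bilinear variant of Section~\ref{s:P} follows by the same argument applied to one coloring slot at a time. The main obstacle I expect is sign bookkeeping when transferring coboundaries in $K$ to coboundaries in $\lk(b,K)$, since the vertex orderings on $\sigma^n=b\star\rho$ and on $\rho\subset\lk(b,K)$ differ by a shuffle depending on how the two vertices of $b$ interleave with those of $\rho$; fixing a coherent convention (for instance, listing the vertices of $b$ first) should make all identities hold on the nose.
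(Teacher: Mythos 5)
Your proof is correct and follows essentially the same route as the paper's: reduce to a change by a single edge vector $\lambda\uppsi_b$, localize the resulting difference cochain to the star of $b$, transfer it to an $(n-2)$-cocycle on $\lk(b,K)$, kill it there using the hypothesis~\eqref{lk}, and lift the resulting primitive back to an $(n-1)$-cochain on $K$ supported on simplices containing $b$. The only difference is that you flag the sign bookkeeping for the join $b\star\tau$ explicitly, which the paper leaves implicit.
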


\begin{remark}
The situation where map~\eqref{cm} depends on a coloring homology element can of course be formulated as a map
\begin{equation}\label{er}
H_{\mathrm{hex}}^n \times H_{\mathrm{col}}(K,F) \to H^n(K,F).
\end{equation}
\end{remark}

\begin{proof}
It is enough to consider the change of the permitted coloring by \emph{one} edge vector, with a coefficient from~$F$, and corresponding to an edge~$b$. We have to prove that the image $\phi=f(c)$ of a hexagon cocycle~$c$ under~$f$ can change only by a simplicial coboundary. We denote the `old' and `new' versions of~$\phi$ as $\phi_{\mathrm{old}}$ and~$\phi_{\mathrm{new}}$, and their difference as
\[
\Delta = \phi_{\mathrm{new}} - \phi_{\mathrm{old}}.
\]
As $c$ is a cocycle and $f$ a chain map, all of $\phi_{\mathrm{old}}$, $\phi_{\mathrm{new}}$ and~$\Delta$ are \emph{simplicial cocycles}, and we must show that $\Delta$ is, moreover, a \emph{coboundary}.

In our situation, $\phi$ changes only \emph{locally}, namely, only on simplices $\sigma^n$ containing~$b$ and representable thus as
\begin{equation}\label{ts}
\sigma^n=b\star \tau^{n-2},
\end{equation}
where simplex~$\tau^{n-2}$ belongs to~$\lk(b,K)$. Consider a simplicial $(n{-}2)$-cochain~$\Xi$ on~$\lk(b,K)$ defined by
\begin{equation}\label{wg}
\Xi(\tau^{n-2}) = \Delta(\sigma^n)
\end{equation}
for $\tau^{n-2}$ and~$\sigma^n$ as in~\eqref{ts}. As $\Delta$ is a cocycle, $\Xi$ is also a \emph{cocycle on the link\/}~$\lk(b,K)$ and, due to~\eqref{lk}, also a \emph{coboundary}
\begin{equation}\label{wh}
\Xi = \delta \, \widehat{\Xi}
\end{equation}
of an $(n{-}3)$-cochain~$\widehat{\Xi}$ on~$\lk(b,K)$.

We now define $(n-1)$-cochain~$\widehat{\Delta}$ on~$K$, vanishing outside~$\lk(b,K)$, as follows:
\begin{equation}\label{wD}
\widehat{\Delta}(\sigma^{n-1}) = \begin{cases} \widehat{\Xi}(\tau^{n-3}) & \text{for }\sigma^{n-1} = b\star \tau^{n-3},\\
0 & \text{for other }\sigma^{n-1}. \end{cases}
\end{equation}
It follows from \eqref{wg}, \eqref{wh} and~\eqref{wD} that $\Delta = \delta \widehat{\Delta}$.
\end{proof}

\paragraph{Bilinear case.} The bilinear version of Theorem~\ref{th:c} says that the cohomology maps~\eqref{cm} depend on the \emph{pair} of coloring homology classes of the permitted colorings taking part in the construction. As for the proof, it begins with the words `it is enough to consider the change of \emph{one} permitted coloring by \emph{one} edge vector'. Otherwise, everything goes the same way as in the `polynomial' case.

\section{Hexagon cohomology, coloring homology, and simplicial cohomology in a piecewise linear four-manifold}\label{s:M}

Theorem~\ref{th:c} can be reformulated (especially if we look at~\eqref{er}) as follows: given a hexagon $n$-cocycle~$c$ (up to a coboundary), and if the technical condition~\eqref{lk} is fulfilled, we obtain a polynomial mapping
\begin{equation}\label{col-n}
g_{\mathrm{col}}^{(n)} \colon \quad H_{\mathrm{col}}(K,F) \to H^n(K,F).
\end{equation}

Recall that we worked in Section~\ref{s:K} within a fixed finite simplicial complex~$K$, with the
numbering of its vertices also fixed. Now we are going to consider a \emph{four-dimen\-sional piecewise linear manifold}~$M$, and let $K$ represent a triangulation of~$M$ (this is often written as $M=|K|$). As the link of an edge in a PL 4-mani\-fold is a 2-sphere, \eqref{lk} holds of course for $n=3$, but not for $n=4$. Nevertheless, mapping~\eqref{col-n} can be defined quite naturally for $n=4$ as well, using the following roundabout way.

Let $I=[0,1]$, and consider the direct product $\mathcal M=M\times I$. In~$\mathcal M$, the link of every edge has trivial second cohomology, hence, Theorem~\ref{th:c} does work for $\mathcal M$ and $n=4$. Also, $M\times \{0\} \cong M$ is a deformation retract of~$\mathcal M$, so there exists a canonical isomorphism $H^4(\mathcal M,F) \cong H^4(M,F)$.

The conclusion is that the cohomology map~\eqref{cm}, for $n=4$ and $K$ a triangulation of~$M$, again depends only on the coloring homology class of the permitted coloring. Or, in other words, for a hexagon 4-cocycle given to within adding a coboundary, and $n=4$, we have also defined the polynomial mapping~\eqref{col-n}.

Moreover, we are going to show that mapping~\eqref{col-n}, for $n=$ either~$3$ or~$4$, does \emph{not} actually depend on the specific triangulation of~$M$ or numbering of its vertices. As there is no problem with replacing $H^n(K,F)$ with $H^n(M,F)$, we will focus our attention on the term $H_{\mathrm{col}}(K,F)$ and on the mapping~$g_{\mathrm{col}}^{(n)}$ itself.

\begin{theorem}\label{th:m}
Let $K_1$ and~$K_2$ be two simplicial complexes corresponding to two triangulations of a given PL 4-manifold~$M$, and let either $n=3$ or $n=4$. Then, there exists an \emph{isomorphism} $\iota\colon\; H_{\mathrm{col}}(K_1,F)\to H_{\mathrm{col}}(K_2,F)$ making the following diagram commutative:
 \begin{equation}\label{D}
  \begin{tikzcd}[row sep=tiny]
H_{\mathrm{col}}(K_1,F) \arrow[dr] \arrow[dd, "\iota"] &  \\
& H^n(M,F) \\
H_{\mathrm{col}}(K_2,F) \arrow[ur] & 
  \end{tikzcd}
 \end{equation}
Two non-vertical arrows in~\eqref{D} are of course the versions of~$g_{\mathrm{col}}^{(n)}$ for $K_1$ and~$K_2$.
\end{theorem}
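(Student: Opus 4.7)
My plan is to reduce to a single Pachner move via Pachner's theorem, then realise the move geometrically as a cobordism and apply Theorem~\ref{th:f} inside the cobordism to transport the simplicial cocycles between the two boundaries. Since any two triangulations of~$M$ are connected by a finite chain of Pachner moves (together with vertex relabellings, which act trivially on both coloring homology and on $H^n(M,F)$), and since the claim is transitive along such a chain, it suffices to define $\iota$ for a single Pachner move $C\to\bar C$ turning $K_1=K_{\mathrm{ini}}$ into $K_2=K_{\mathrm{fin}}$ and verify commutativity of~\eqref{D} in that case. For this I take $\iota$ to be the canonical isomorphism supplied by Theorem~\ref{th:fh}\ref{i:fh4}.

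To prove commutativity, realise the move as the boundary of a single $5$-simplex $\sigma^5$ (whose boundary decomposes as $C\cup\bar C$), thereby building a triangulation $\mathcal K$ of $\mathcal M=M\times I$ with $K_1\hookrightarrow\mathcal K$ as $M\times\{0\}$ and $K_2\hookrightarrow\mathcal K$ as $M\times\{1\}$. As already noted before the theorem, in $\mathcal K$ the link of every edge is a $3$-sphere or $3$-ball, so $H^{n-2}(\lk(b,\mathcal K),F)=0$ for $n=3,4$ and Theorem~\ref{th:c} applies inside $\mathcal K$. Given a representative $\zeta\in V_{K_1}$, Theorem~\ref{th:fh}\ref{i:fh1} extends its restriction to $\partial C$ to a permitted coloring of $\bar C$; glued with $\zeta$ this yields a permitted coloring $\tilde\zeta$ of $\mathcal K$ whose restriction $\zeta':=\tilde\zeta|_{K_2}$ agrees with $\zeta$ outside the move region, so $\iota([\zeta])=[\zeta']$ by the definition of $\iota$. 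For a hexagon $n$-cocycle $c$, Theorem~\ref{th:f} applied to $(\mathcal K,\tilde\zeta)$ then produces a simplicial cocycle $\tilde\phi=f_n(c)\in C^n(\mathcal K,F)$ whose restrictions to $K_i\subset\mathcal K$ are exactly the cocycles $\phi_i$ representing $g_{\mathrm{col}}^{(n)}([\zeta])$ and $g_{\mathrm{col}}^{(n)}([\zeta'])$. Since both inclusions $K_i\hookrightarrow\mathcal K$ are deformation retracts, they induce isomorphisms $H^n(\mathcal K,F)\cong H^n(K_i,F)$ that agree with the canonical identifications $H^n(K_i,F)\cong H^n(M,F)$, and therefore $[\phi_1]=[\phi_2]$ in $H^n(M,F)$; this is the commutativity of~\eqref{D}. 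The isomorphism property of $\iota$ was already established in Theorem~\ref{th:fh}\ref{i:fh4}.

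The main obstacle I expect is independence of choices. The extension $\tilde\zeta$ of $\zeta$ across $\bar C$ is unique only up to an $a_{6-k}$-dimensional space of interior permitted colorings (Theorem~\ref{th:fh}\ref{i:fh3}), and $\zeta$ itself represents a coloring homology class only modulo edge-generated colorings. Both ambiguities are supported on simplices involving edges interior to $\mathcal K$, so by Theorem~\ref{th:c} their contribution to $\tilde\phi$ is a simplicial coboundary and dies in $H^n(\mathcal K,F)$, hence in $H^n(M,F)$. A secondary bookkeeping issue is that $1\leftrightarrow 5$ Pachner moves insert or delete a vertex, so when composing moves along a Pachner chain one must keep the substitutions~\eqref{nmb-subst} consistent; this is routine once $\iota$ is combined with the trivial vertex-permutation isomorphism.
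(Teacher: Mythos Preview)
Your approach is genuinely different from the paper's and, at its core, works---but several of your auxiliary claims are false as stated and should be corrected.

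\textbf{Comparison with the paper.} The paper treats the two dimensions separately. For $n=3$ it uses nondegeneracy of the $\frown$-pairing: every simplicial $3$-cycle can be chosen to avoid the inner tetrahedra of $C$ and $\bar C$, so the value of $g_{\mathrm{col}}^{(3)}$ can be computed entirely outside the move region. For $n=4$ it uses the hexagon cocycle condition directly: since $f_4(c)$ is a simplicial $4$-cocycle, its value on $\partial\Delta^5$ vanishes, and with the opposite orientations of $C$ and $\bar C$ this gives $f_4(c)\frown C=f_4(c)\frown\bar C$. Your cobordism idea is more uniform: you build one complex $\mathcal K$ containing both $K_1$ and $K_2$ as deformation retracts, extend the coloring, and let the global cocycle $\tilde\phi$ restrict to both sides. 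This is elegant and closer in spirit to the $M\times I$ trick the paper uses (only) to \emph{define} the $n=4$ map.

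\textbf{What needs fixing.} First, the complex $\mathcal K=K_1\cup_C\sigma^5$ you describe does \emph{not} triangulate $M\times I$; its realization is $M\cup_{D^4}D^5$, which is not even a $5$-manifold. This does no harm: what you actually use is that $K_1\hookrightarrow\mathcal K$ and $K_2\hookrightarrow\mathcal K$ are both homotopy equivalences (each inclusion has a $5$-ball attached along a $4$-ball as complement), and that is true. Second, precisely because $|\mathcal K|$ is not a $5$-manifold, your claim that every edge link in $\mathcal K$ is a $3$-sphere or $3$-ball is false: for edges of $K_1$ outside $C$ the link is still $S^2$, so $H^2(\lk(b,\mathcal K),F)\neq 0$ and Theorem~\ref{th:c} does \emph{not} apply globally in $\mathcal K$ for $n=4$. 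Fortunately you do not need it. The ambiguity in the representative $\zeta$ is already absorbed by the well-definedness of $g_{\mathrm{col}}^{(n)}$ on $H_{\mathrm{col}}(K_1,F)$, established before the theorem; and the extension ambiguity changes $\zeta'$ only by edge-generated colorings (Theorem~\ref{th:fh}\ref{i:fh3}), hence leaves $[\zeta']\in H_{\mathrm{col}}(K_2,F)$ and $g_{\mathrm{col}}^{(n)}([\zeta'])$ unchanged. Third, vertex relabellings do \emph{not} act trivially on $V_K$ or $V_K^{(0)}$ (the sign $(-1)^{i+1}$ in~\eqref{phi_1234-const} depends on the ordering), so saying they ``act trivially'' is wrong. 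The paper handles this by removing a vertex via Pachner moves and reinserting it with a new label; you should either do the same or at least state that relabelling is realized through Pachner moves rather than asserted to be innocuous.
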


The proof of Theorem~\ref{th:m} is given in the next three subsections. Namely, in Subsections \ref{ss:c3} and~\ref{ss:c4}, we consider the case where the triangulation is changed by one Pachner move, for $n=3$ and $n=4$, respectively. We must pay attention to the vertex order, so we emphasize that our Pachner moves are as described in Subsection~\ref{ss:P} (don't forget also Important Remark~\ref{ir:full}), together with the possibility~\eqref{nmb-subst} of changing the vertex numbering without changing the order. As any two triangulations can be connected by a chain of Pachner moves, it will remain to show how the vertex order \emph{can} be changed, and this is done in Subsection~\ref{ss:vn}.

\subsection{Dimension three}\label{ss:c3}

Let $n=3$, and consider what happens with mapping~\eqref{col-n} under a Pachner move. Returning to the notations of Theorem~\ref{th:fh}, we denote $K_{\mathrm{ini}}$ and~$K_{\mathrm{fin}}$ the triangulations of~$M$ before and after this move, and use the isomorphism~\eqref{i-f} between the coloring homology groups.

On the other hand, it is known that the $\frown$-product between $n$-th cohomology and $n$-th homology with coefficients in a field is nondegenerate. This implies that, in our situation, any element~$h$ of the third cohomology group is determined by the values that any 3-cocycle representing~$h$ takes on 3-\emph{cycles} modulo 3-\emph{boundaries}. We can now take all simplicial 3-cycles intended for calculating the mentioned values such that they contain no \emph{inner} tetrahedra of cluster $C$ or~$\bar C$ (that is, with zero coefficients at such tetrahedra. For notations $C$ or~$\bar C$ see Subsection~\ref{ss:fh}). We can thus describe mapping~\eqref{col-n} \emph{avoiding} the cluster changed by the Pachner move. This means, together with item~\ref{i:fh4} of Theorem~\ref{th:fh}, that the commutative diagram~\eqref{D}, for $K_1=K_{\mathrm{ini}}$, \ $K_2=K_{\mathrm{fin}}$, and \eqref{i-f} as~$\iota$, indeed takes place.

\subsection{Dimension four}\label{ss:c4}

We now consider how a Pachner move affects the image of a given element $h_{\mathrm{col}}\in H_{\mathrm{col}}(K,F)$ under mapping~\eqref{col-n}, for $n=4$. This Pachner move replaces (using the notations of Section~\ref{s:H}) a cluster~$C$ of $k$~pentachora, $1\le k\le 5$, by~$\bar C$---its closed complement in~$\partial \Delta^5$.

Recall that there is a hexagon 4-\emph{cocycle}~$c$ behind our mapping~\eqref{col-n}. This~$c$ is sent to a simplicial cocycle by~$f_4$ in diagram~\eqref{d}, and $f_4(c)$ is nothing but a representative of the simplicial cohomology element $g_{\mathrm{col}}^{(4)}(h_{\mathrm{col}})$. In application to~$\partial \Delta^5$, this means that
\begin{equation}\label{gD}
g_{\mathrm{col}}^{(4)}(h_{\mathrm{col}})\frown \partial \Delta^5 = 0.
\end{equation}

Now, if our PL manifold~$M$ is orientable, we proceed as follows. Choose an orientation of~$M$; it induces also an orientation of either $C$ or~$\bar C$ as part of~$M$. These orientations of $C$ and~$\bar C$ are, however, \emph{not consistent} if $C$ and~$\bar C$ are regarded as part of~$\partial \Delta^5$. Hence, if $C$ and~$\bar C$ are oriented this way, \eqref{gD} implies
\begin{equation}\label{CbC}
g_{\mathrm{col}}^{(4)}(h_{\mathrm{col}})\frown C - g_{\mathrm{col}}^{(4)}(h_{\mathrm{col}})\frown \bar C = 0.
\end{equation}
We see that replacing $C$ by~$\bar C$ simply does not change the product $g_{\mathrm{col}}^{(4)}(h_{\mathrm{col}})\frown c$ for any 4-\emph{cycle} $c\in H_4(M,F)$, and hence the image of~$h_{\mathrm{col}}$ in~$H^4(M,F)$ stays also the same.

In the most general case, $M$ may consist of several connected components. For the orientable components, we proceed as above. A non-orientable component yields a nontrivial 4-cycle only if our field~$F$ is of characteristic~2, in which case \eqref{gD} also surely implies~\eqref{CbC}.

\subsection{Independence of vertex numbering}\label{ss:vn}

We have shown the invariance of mappings \eqref{col-n}, for $n=3$ and $n=4$, under Pachner moves. One small problem that still remains is that we were always assuming that the vertices of any triangulation are \emph{ordered} (see Subsection~\ref{ss:vo}). We are now going to show that these mappings do not actually depend on the order of vertices.

Indeed, here is how we can change the position of any one vertex~$v$ in this ordering. We do any chain of Pachner moves that removes~$v$ from the triangulation; this removal is of course performed by a move 5--1. Then we do all this chain backwards, but when doing the corresponding move 1--5, we change the position of~$v$ in the order of vertices into any other we like. Such possibility is of course ensured by the fact that we have a \emph{full} hexagon, see Important Remark~\ref{ir:full}.

We have thus proved Theorem~\ref{th:m}, and defined the following mappings for a PL 4-manifold~$M$:
\begin{equation}\label{col-3}
H_{\mathrm{col}}(M,F) \to H^3(M,F), \text{ \ \ for a given hexagon 3-cocycle},
\end{equation}
and
\begin{equation}\label{col-4}
H_{\mathrm{col}}(M,F) \to H^4(M,F), \text{ \ \ for a given hexagon 4-cocycle}.
\end{equation}

\paragraph{Bilinear case.} In the bilinear case, $H_{\mathrm{col}}(K,F)$ in~\eqref{col-n} is replaced by $H_{\mathrm{col}}(K,F)\times H_{\mathrm{col}}(K,F)$. Similarly, the spaces of color homologies are replaced with their Cartesian squares in the formulation of Theorem~\ref{th:m} and in its proof (namely, in Subsection~\ref{ss:c4}). Otherwise, everything goes the same way, and we arrive at the bilinear versions of \eqref{col-3} and~\eqref{col-4}; these can be found below as \eqref{col-bil-3} and~\eqref{col-bil-4}.

\section{Constant hexagon as a limiting case of nonconstant hexagon}\label{s:L}

Our `constant' edge functionals~\eqref{phi_1234-const} and edge vectors~\eqref{psi_1234-const} can be obtained as a limiting case (formal limit in the case of a finite characteristic) of `nonconstant' edge functionals and edge vectors introduced in~\cite{nonconstant}. We will content ourself here with explaining how it works only for edge \emph{functionals}; for edge \emph{vectors}, the procedure is much the same and is left as an easy exercise for the reader. Also, we will do everything on the example of one tetrahedron $t=1234$.

We first take edge functionals in the form~{\color{black}\cite[(31)]{nonconstant}}:
\begin{equation}\label{phi_1234-inf}
\setlength\arraycolsep{.6em}
\begin{pmatrix} \upphi_{12} \\ \upphi_{13} \\ \upphi_{14} \\
 \upphi_{23} \\ \upphi_{24} \\ \upphi_{34} \end{pmatrix}_{1234} =
\begin{pmatrix} \omega_{234}-\omega_{134} & 0 \\
                \omega_{124} & \omega_{234} \\
                -\omega_{123} & -\omega_{234} \\
                -\omega_{124} & -\omega_{134} \\
                \omega_{123} & \omega_{134} \\
                0 & \omega_{123}-\omega_{124}
\end{pmatrix} ,
\end{equation}
where $\omega$ is a $F$-valued simplicial 2-cocycle (which means, in application to the tetrahedron~$1234$, that its values entering~\eqref{phi_1234-inf} satisfy $\omega_{123}-\omega_{124}+\omega_{134}-\omega_{234}=0$). We choose our~$\omega$ as follows:
\begin{equation}\label{om-rho}
\omega_{ijk} = 1+o\cdot\varrho_{ijk}, \qquad i<j<k,
\end{equation}
where $\varrho$ is a \emph{given} simplicial 2-cocycle, and $o$ is an infinitesimal parameter. To be exact, $o$ is finite at this moment, but we are going to set $o\to 0$. Also, $\varrho$ is supposed to be generic enough so that we don't encounter division by zero in our expressions below (see expression for~$A_2$ in~\eqref{A12}).

Now, we compose new edge functionals~$\upphi'_{ij}$ as follows: denote
\begin{equation}\label{Ao}
A_o = \begin{pmatrix} 1 & o^{-1} \\ 0 & -o^{-1} \end{pmatrix}
\end{equation}
---this matrix will be responsible for the invertible linear transformation in the space $V_t = F^2$ of colorings of our tetrahedron~$t$ for each finite~$o$---and set
\begin{equation}\label{f_1234-dinf}
\setlength\arraycolsep{.6em}
\begin{pmatrix} \upphi'_{12} \\ \upphi'_{13} \\ \upphi'_{14} \\
 \upphi'_{23} \\ \upphi'_{24} \\ \upphi'_{34} \end{pmatrix}_{1234} =
\lim_{o\to 0}\, \biggl(
\begin{pmatrix} \upphi_{12} \\ \upphi_{13} \\ \upphi_{14} \\
 \upphi_{23} \\ \upphi_{24} \\ \upphi_{34} \end{pmatrix}_{1234} \!\!\! A_o \biggr) =
\begin{pmatrix} 0 & \varrho_{234}-\varrho_{134} \\
                1 & \varrho_{124}-\varrho_{234} \\
                -1 & -\varrho_{123}+\varrho_{234} \\
                -1 & -\varrho_{124}+\varrho_{134} \\
                1 & \varrho_{123}-\varrho_{134} \\
                0 & -\varrho_{123}+\varrho_{124}
\end{pmatrix} .
\end{equation}

Then we make the linear transform in the space of tetrahedron~$1234$ colorings, corresponding to multiplying \eqref{f_1234-dinf} from the right by the product $A_1 A_2$, where
\begin{equation}\label{A12}
A_1 = \begin{pmatrix} 1 & \varrho_{134}- \varrho_{124}\\ 0 & 1\end{pmatrix},\qquad\quad
A_2 = \begin{pmatrix} 1 & 0\\ 0 & (\varrho_{123}- \varrho_{124})^{-1}\end{pmatrix},
\end{equation}
that is, we set
\begin{equation*}
\begin{pmatrix} \upphi''_{12} \\ \upphi''_{13} \\ \upphi''_{14} \\
 \upphi''_{23} \\ \upphi''_{24} \\ \upphi''_{34} \end{pmatrix}_{1234} =
\begin{pmatrix} \upphi'_{12} \\ \upphi'_{13} \\ \upphi'_{14} \\
 \upphi'_{23} \\ \upphi'_{24} \\ \upphi'_{34} \end{pmatrix}_{1234} \!\!\! A_1 A_2 .
\end{equation*}
Finally, we \emph{rename} $\upphi''_{ij}\mapsto \upphi_{ij}$. We have arrived exactly at~\eqref{phi_1234-const}, for $k_1=1$, \ldots, $k_4=4$.

\begin{ir}\label{ir:b}
We could of course apply the linear transformation corresponding to the whole product $A_o A_1 A_2$ \emph{before} taking the limit $o\to 0$. We hope, however, that our step-by-step approach makes things clearer.
\end{ir}

\begin{ir}\label{ir:f}
As we see, the limiting process described above is far from being unique, because it depends on a chosen cocycle~$\varrho$.
\end{ir}

\section{Experimental results}\label{s:E}

We present here, just for illustration, some calculation results showing how mappings \eqref{col-3} and~\eqref{col-4}, or their bilinear analogues, can look explicitly. Then, in Subsection~\ref{ss:tn}, we briefly tell the reader what happens with the spaces $V_K$ and~$V_K^{(0)}$---recall formula~\eqref{ch}---at the point of passing from constant to nonconstant hexagon. The conclusion is that, first, our `constant' case is already very intriguing, and second, that the investigation of its neighborhood within the `nonconstant' case---which will be a big and separate research---is extremely promising.

\subsection{Some nontrivial hexagon cocycles}\label{ss:ntc}

\paragraph{Bilinear 3-cocycle.} Given a nontrivial hexagon bilinear 3-cocycle, we come to the bilinear analogue of~\eqref{col-3}, that is, a bilinear mapping
\begin{equation}\label{col-bil-3}
H_{\mathrm{col}}\times H_{\mathrm{col}} \to H^3(M,F).
\end{equation}
Namely, we will use the following 3-cocycle:
\begin{equation}\label{c(3)}
c^{(3)} = -x y' - y x'.
\end{equation}
Here $\begin{pmatrix}x \\ y\end{pmatrix}$ and $\begin{pmatrix}x' \\ y'\end{pmatrix}$ are a pair colorings of the tetrahedron~$1234$, as required in~\eqref{bi}. Recall that \emph{all} colorings of a separate tetrahedron are permitted.

\paragraph{Bilinear 4-cocycle.} Similarly, there is also the following nontrivial hexagon bilinear 4-cocycle:
\begin{equation}\label{c(4)}
c^{(4)} = y_{2345}\, y'_{1234}.
\end{equation}
which yields a mapping
\begin{equation}\label{col-bil-4}
H_{\mathrm{col}}\times H_{\mathrm{col}} \to H^4(M,F).
\end{equation}
---the bilinear analogue of~\eqref{col-4}. Recall that cocycle~\eqref{c(4)}---as well as \eqref{c1(4)} and \eqref{c2(4)} below---belongs to the `standard' pentachoron $\Delta^4 = 12345$, see Section~\ref{s:P}.

\paragraph{Two cubic 4-cocycles in characteristic 2.} There are two linearly independent modulo coboundaries \emph{cubic} hexagon 4-cocycles in characteristic 2. The first of them is obtained from~\eqref{c(4)} by setting $y'_t=y_t^2$:
\begin{equation}\label{c1(4)}
c_1^{(4)} = y_{2345}\, y_{1234}^2
\end{equation}
Recall that raising to the second power is a linear operation---\emph{Frobenius endomorphism}---for a field of characteristic~2.

To make the structure of the second 4-cocycle more visible, we introduce, for the moment, the following notations:
\begin{align*}
a &= x_{2345} + y_{2345}, \quad
b = x_{1345} + y_{1345}, \\
c &= x_{1245} + y_{1245}, \quad
d = x_{1235} + y_{1235}, \quad
e = x_{1234} + y_{1234}.
\end{align*}
The cocycle is then
\begin{equation}\label{c2(4)}
c_2^{(4)} = bde + bce + ace + acd + abd.
\end{equation}

\paragraph{Other nontrivial cocycles in finite characteristics.} Many more nontrivial cocycles have been calculated in~\cite{cubic}. Note that a different basis in the two-dimensional space of colorings of one tetrahedron was used in~\cite{cubic}. Namely, if we denote, for a moment, the $x$ and~$y$ of~\cite{cubic} as $\tilde x$ and~$\tilde y$ (and our colors~\eqref{sft} as simply $x$ and~$y$), then
\begin{equation*}
\begin{pmatrix}\tilde x \\ \tilde y\end{pmatrix} = \begin{pmatrix}-1 & 1 \\ 1 & 0\end{pmatrix} \begin{pmatrix}x \\ y\end{pmatrix} .
\end{equation*}

\subsection{Calculations for specific manifolds using constant cohomology}\label{ss:m}

The first experimental result, and unexplained as yet, is that the dimension~$d$ of coloring homology space~$H_{\mathrm{col}}(M,F)$ is the sum of the dimensions of two usual cohomology groups:
\begin{equation}\label{nb}
d \stackrel{\mathrm{def}}{=} \dim H_{\mathrm{col}}(M,F) = \dim H^2(M,F) + \dim H^3(M,F).
\end{equation}
We introduce some basis in~$H_{\mathrm{col}}(M,F)$, and write polynomial functions on~$H_{\mathrm{col}}(M,F)$ in terms of coordinates $X_1,\ldots,X_d$ w.r.t.\ this basis. 
For the bilinear case, the coordinates of the second element in~$H_{\mathrm{col}}(M,F)$ are denoted by primed letters $X'_1,\ldots,X'_d$.

For mapping \eqref{col-3}, we also introduce a basis in~$H^3(M,F)$. Hence, mapping \eqref{col-3}, corresponding to cocycle~$c^{(3)}$~\eqref{c(3)}, is given by $d_3=\dim H^3(M,F)$ polynomials; we denote them below as $p_1^{(3)},\ldots,p_{d_3}^{(3)}$.

For mapping \eqref{col-4}, we identify an element of~$H^4(M,F)$ with its value on the fundamental class~$[M]$ (because we are going to consider only connected manifolds). Hence, any of cocycles $c^{(4)}$, $c_1^{(4)}$ and~$c_2^{(4)}$ of Subsection~\ref{ss:ntc} gives just one polynomial, denoted respectively as $p^{(4)}$, $q^{(4)}$ and~$r^{(4)}$.

Below our notations are as usual: $S^n$ is an $n$-dimen\-sional sphere, $T^n$ is an $n$-dimen\-sional torus, $\mathbb RP^n$ is an $n$-dimen\-sional real projective space, $\mathbb CP^2$ is a \emph{complex} two-dimen\-sional projective space, and $S^2\tildetimes S^2$ denotes the \emph{twisted} product of two spheres~$S^2$.

We present our results in the following form: the manifold~$M$ and the field~$F$ we are working with form a header highlighted by underlining, and then go the experimental results for them. We think it is enough to give here a few examples with just one field $F=\mathbb F_2$---the prime Galois field of two elements; the more so because $\mathbb F_2$ works well with both orientable and unorientable manifolds.

\begin{ir}
Experimental result~\eqref{nb} works of course in all characteristics, as far as we could check.
\end{ir}

\paragraph{$\underline{M=\mathbb CP^2,\qquad F=\mathbb F_2 \strutik}$}
\begin{align*}
d &= 1,\\
p^{(4)} &= X_1 X'_1,\\
q^{(4)} &= X_1^3,\\
r^{(4)} &= q^{(4)}.
\end{align*}

\paragraph{$\underline{M=S^2\times S^2,\qquad F=\mathbb F_2 \strutik}$}
\begin{align*}
d &= 2,\\
p^{(4)} &= X_1 X'_2+X_2 X'_1,\\
q^{(4)} &= X_1^2 X_2+X_1 X_2^2,\\
r^{(4)} &= q^{(4)}.
\end{align*}

\paragraph{$\underline{M=S^2\tildetimes S^2,\qquad F=\mathbb F_2 \strutik}$}
\begin{align*}
d &= 2,\\
p^{(4)} &= X_1 X'_2+X_2 X'_1+X_2 X'_2,\\
q^{(4)} &= X_1^2 X_2+X_1 X_2^2+X_2^3,\\
r^{(4)} &= q^{(4)}.
\end{align*}

\paragraph{$\underline{M=S^2\times T^2,\qquad F=\mathbb F_2 \strutik}$}
\begin{align*}
d &= 4,\\
p_1^{(3)} &= X_1 X'_3+X'_1 X_3,\\
p_2^{(3)} &= X_1 X'_4+X'_1 X_4,\\[1.5ex]
p^{(4)} &= X_1 X'_2+X_1 X'_3+X_2 X'_1+X'_1 X_3,\\
q^{(4)} &= X_1^2 X_2+X_1^2 X_3+X_1 X_2^2+X_1 X_3^2,\\
r^{(4)} &= q^{(4)}.
\end{align*}

\paragraph{$\underline{M=\mathbb RP^2\times S^2,\qquad F=\mathbb F_2 \strutik}$}
\begin{align*}
d &= 3,\\
p_1^{(3)} &= X_2 X'_3+X'_2 X_3,\\[1.5ex]
p^{(4)} &= X_1 X'_3+X'_1 X_3,\\
q^{(4)} &= X_1^2 X_3+X_1 X_3^2,\\
r^{(4)} &= X_1^2 X_3+X_1 X_3^2+X_2^2 X_3.
\end{align*}

\paragraph{$\underline{M=\mathbb RP^2\times T^2,\qquad F=\mathbb F_2 \strutik}$}
\begin{align*}
d &= 7,\\
p_1^{(3)} &= X_1 X'_6+X'_1 X_6+X_4 X'_5+X_5 X'_4+X_5 X'_6+X_6 X'_5,\\
p_2^{(3)} &= X_1 X'_2+X_1 X'_6+X_1 X'_7+X_2 X'_1+X'_1 X_6+X'_1 X_7+X_3 X'_4\\
 &\qquad +X_3 X'_6+X_4 X'_3+X_4 X'_7+X_6 X'_3+X_6 X'_7+X_7 X'_4+X_7 X'_6,\\
p_3^{(3)} &= X_2 X'_5+X'_2 X_5+X_3 X'_6+X_4 X'_7+X_5 X'_6+X_5 X'_7+X_6 X'_3\\
 &\qquad +X_6 X'_5+X_7 X'_4+X_7 X'_5,\\[1.5ex]
p^{(4)} &= X_1 X'_7+X'_1 X_7+X_3 X'_5+X_5 X'_3+X_5 X'_7+X_7 X'_5,\\
q^{(4)} &= X_1^2 X_7+X_1 X_7^2+X_3^2 X_5+X_3 X_5^2+X_5^2 X_7+X_5 X_7^2,\\
r^{(4)} &= X_1^2 X_7+X_1 X_7^2+X_2 X_5^2+X_3^2 X_5+X_3^2 X_6+X_3 X_5^2+X_4^2 X_7\\
 &\qquad +X_5^2 X_6+X_5 X_7^2+X_6^2 X_7+X_6 X_7^2.
\end{align*}

\paragraph{$\underline{M=\mathbb RP^2\times \mathbb RP^2,\qquad F=\mathbb F_2 \strutik}$}
\begin{align*}
d &= 5,\\
p_1^{(3)} &= X_1 X'_5+X'_1 X_5+X_1 X'_4+X'_1 X_4+X_2 X'_3+X'_2 X_3,\\
p_2^{(3)} &= X_3 X'_5+X_2 X'_5+X'_3 X_5+X'_2 X_5+X_3 X'_4+X'_3 X_4,\\[1.5ex]
p^{(4)} &= X_1 X'_5+X'_1 X_5+X_3 X'_3,\\
q^{(4)} &= X_1 X_5^2+X_1^2 X_5+X_3^3,\\
r^{(4)} &= X_3^2 X_5+X_2^2 X_5+X_1^2 X_5+X_1 X_4^2+X_3^2 X_4+X_3^3+X_2 X_3^2.
\end{align*}

\paragraph{$\underline{M=\mathbb RP^4,\qquad F=\mathbb F_2 \strutik}$}
\begin{align*}
d &= 2,\\
p_1^{(3)} &= X_1 X'_2+X_2 X'_1,\\[1.5ex]
p^{(4)} &= X_2 X'_2,\\
q^{(4)} &= X_2^3,\\
r^{(4)} &= X_1^2 X_2.
\end{align*}

In the above examples, one can see that
\begin{equation}\label{qr}
q^{(4)} = r^{(4)}
\end{equation}
for all \emph{orientable} manifolds. Interestingly, equality~\eqref{qr} may be violated for more complicated manifolds. Namely, define the simplest \emph{twisted tori} as follows. First, we denote~$\tilde T_n^3$ the fiber bundle with base~$S^1$, fiber~$T^2$, and monodromy matrix~$\begin{pmatrix}1&n\\ 0&1\end{pmatrix}$. Such fiber bundles are \emph{three-dimen\-sional} twisted tori. Then we consider \emph{four-dimen\-sional} twisted tori~$\tilde T_n^4$ defined simply as direct products of~$\tilde T_n^3$ with a circle:
\begin{equation*}
\tilde T_n^4 = \tilde T_n^3 \times S^1.
\end{equation*}
If $n=0$, we get of course the usual torus $\tilde T_0^4 = T^4$.

The unexpected calculation result is: \eqref{qr} holds for $n=0$, 1, 3 and~4, but \emph{not} for $n=2$.

\subsection{What happens at the point of passing to a nonconstant case}\label{ss:tn}

In the `nonconstant' case of paper~\cite{nonconstant}, there are also linear spaces $W_K$ of permitted and $W_K^{(0)}$ of edge-generated colorings---direct analogues of spaces $V_K$ and~$V_K^{(0)}$ introduced in Subsection~\ref{ss:pe}. A very interesting question is what happens with them under the passage to the limit described in Section~\ref{s:L}. This is going to be the subject of a separate research; here we only explain some simple ideas and inform the reader of some experimental facts. Also, we restrict ourself here to the field $F=\mathbb R$ of real numbers, in order to be able to use simple analytic arguments.

Space~$W_K^{(0)}$ is the linear span of $N_1$ vectors in~$F^{2N_3}=\mathbb R^{2N_3}$ (see the first paragraph in Subsection~\ref{ss:vo} for notations). Suppose we pass to the limit in such way that $W_K^{(0)}$ retains a constant dimension~$d^{(0)}$, then it has a limiting location (at least one, due to the compactness of the real Grassmannian $\Gr(d^{(0)},\mathbb R^{2N_3})$)---a subspace $L_K^{(0)}\subset \mathbb R^{2N_3}$ of the same dimension~$d^{(0)}$.

\begin{remark}
The prelimit invertible linear transform---direct sum of transforms given by the product of matrices \eqref{Ao} and~\eqref{A12} (see also Important Remark~\ref{ir:b}), taken over all two-dimen\-sional color spaces~$V_t\cong F^2$ for all tetrahedra~$t$---clearly does not affect the validity of this argument.
\end{remark}

Every separate `nonconstant' edge vector also has its `constant' limit, given by~\eqref{psi_1234-const} (recall the first paragraph of Section~\ref{s:L}). The linear span of these limiting edge vectors is nothing but our `constant' space~$V_K^{(0)}$, and of course $V_K^{(0)} \subset L_K^{(0)}$, but it may---and does---happen that this inclusion is strict!

Space~$W_K$ is the opposite case: it is singled out by linear \emph{restrictions}. So, its limit~$L_K$ may only be \emph{smaller} than the `constant' space~$V_K$. We come this way to the chain of inclusions:
\begin{equation}\label{coi}
V_K^{(0)} \subset L_K^{(0)} \subset L_K \subset V_K.
\end{equation}

Now the experimental facts:
\begin{enumerate}\itemsep 0pt
 \item at least sometimes,
\begin{equation}\label{L0V0}
\dim L_K^{(0)} = \dim V_K^{(0)}+1,
\end{equation}
 \item at least sometimes,
\begin{equation}\label{LV}
\dim L_K = \dim V_K - 2k, \qquad k=1,2,\ldots.
\end{equation}
\end{enumerate}

\section{Discussion}\label{s:D}

Polynomials calculated in Subsection~\ref{ss:m}, if taken as they are, are not PL manifold invariants, because their definition requires a \emph{basis} in the coloring homology space~$H_{\mathrm{col}}(M,F)$. Moreover, a basis in $H^3(M,F)$ is also required for polynomials $p_1^{(3)},\ldots,p_{d_3}^{(3)}$. Invariant is, of course, the set of these polynomials taken up to linear transformations of the mentioned bases---but this is not the point where to stop!

\begin{remark}
In paper~\cite{cubic}, some simple invariants \emph{were} actually calculated. Namely, given one of our polynomials $q^{(4)}$, $r^{(4)}$, or $q^{(4)}+r^{(4)}$, we let its variables take values from a finite \emph{extension}~$\mathbb F_{2^k}$ of field~$\mathbb F_2$ and calculated, for each $v\in \mathbb F_{2^k}$, how many times the polynomial takes value~$v$. Polynomials $p_1^{(3)},\ldots,p_{d_3}^{(3)}$ were not considered in~\cite{cubic}.
\end{remark}

What looks much more interesting is the fact that the chain~\eqref{coi} of embeddings, taking into account experimental facts \eqref{L0V0} and~\eqref{LV}, brings about some additional structure on our `constant' space~$V_K$ and hence---see~\eqref{ch}---on the coloring homology space, probably relating this latter to usual cohomologies. Given the existence of a great many nontrivial hexagon cocycles~\cite{cubic}, this may lead to very interesting consequences.

\smallskip

Finally, it must be said that the constructions proposed in \cite{cubic,nonconstant} and the present paper, are not confined to just four-dimen\-sional manifolds. Similar things surely can be done in three dimensions, based, for example, on the pentagon relations proposed in~\cite{pentagon}. Moving in the opposite direction, there are indications of the existence of interesting \emph{heptagon} relations for five dimensions.

\end{document}